\documentclass{asl}
\usepackage[utf8]{inputenc}
\usepackage[T1]{fontenc}
\usepackage{dsfont}
\usepackage{multicol}
\usepackage{color}
\usepackage{tikz-cd}

\def\tuple#1{\langle #1\rangle}
\def\set#1#2{\{#1\,|\,#2\}}
\def\f{\varphi}
\def\p{\psi}
\def\logic#1{{\mathrm {#1}}}
\def\frame#1{{{#1}}}

\def\modelt#1{{\mathrm {#1}}}

\def\classt#1{{\mathbf {#1}}}

\def\force{\Vdash}
\def\succ{\mbox{succ}}

\swapnumbers
\newtheorem{theorem}[subsection]{Theorem}
\newtheorem*{theorem*}{Theorem}
\newtheorem{proposition}[subsection]{Proposition}

\newtheorem{claim}[subsection]{Claim}
\newtheorem{lemma}[subsection]{Lemma}
\newtheorem{corollary}[subsection]{Corollary}

\theoremstyle{definition}
\newtheorem{definition}[subsection]{Definition}
\newtheorem*{notation*}{Notation}
\theoremstyle{remark}
\newtheorem*{remark}{Remark}


\newcommand{\fm}{\ensuremath{\mathit{Fm}}}

\title[Topological models of Intuitionistic logic]{Completely separably MAD families \\ and the modal logic of $\beta\omega$}

\author{Tom\'a\v{s} L\'avi\v{c}ka}
\revauthor{L\'avi\v{c}ka, Tom\'a\v{s}}
\address{The Czech Academy of Sciences\\ 
	Institute of Information Theory and Automation\\ 
	Pod vod\'{a}renskou v\v{e}\v{z}\'{i}~4\\
	182 07 Praha, Czech Republic
}
\email{lavickat@utia.cas.cz}

\author{Jonathan L. Verner}
\revauthor{Verner, Jonathan L.} 
\address{Department of Logic\\ 
	Faculty of Arts\\ 
	Charles University\\
	Palachovo n\'am. 2\\
	116 38 Praha 1 Czech Republic}
\email{jonathan.verner@ff.cuni.cz}
\thanks{The research of the first author was supported by project no.\ GA17-04630S of the Czech Science Foundation.
The research of the second author was supported by FWF-GAČR grant no.\ 17-33849L: Filters, Ultrafilters and Connections with Forcing.}

\begin{document}
\tikzcdset{
	arrow style=tikz,
	diagrams={>={Straight Barb[scale=0.8]}}
}

\begin{abstract}
We show in ZFC that the existence of completely separable maximal almost disjoint families of subsets of $\omega$ implies that the modal logic $\logic{S4.1.2}$  is complete with respect to the \v{C}ech-Stone compactification of the 
natural numbers, the space $\beta\omega$. In the same fashion we prove that the modal logic $\logic{S4}$ is complete with respect to the space {$\omega^*=\beta\omega\setminus\omega$}. This improves the results
of G.~Bezhanishvili and J.~Harding in \cite{Bezhanishvili:2009}, where the
authors prove these theorems under stronger assumptions (\(\mathfrak{a}=\mathfrak{c}\)). 
Our proof is also somewhat simpler. 
\end{abstract}
\maketitle

\section{Introduction}
Topological semantics appeared as one of the first semantics for both intuitionistic logic $\logic{IL}$ and modal logic $\logic{S4}$. First, in 1938, Tarski, in his pioneering work~\cite{Tarski:1938}, proved that intuitionistic logic is complete with respect to all topological spaces (propositions are interpreted as open sets). He also obtained the remarkable result that any dense-it-itself metrizable space (e.g. the Cantor space $\mathds{C}$ or the real line $\mathds{R}$), is complete for $\logic{IL}$. Later in 1944 he and McKinsey in their famous paper~\cite{Tarski:1944} proved an analogous results for the modal logic $\logic{S4}$. Recently in~\cite{Bezhanishvili:2009} G. Bezhanishvili and J. Harding described logical counterparts of the important spaces $\beta\omega$ and $\omega^*$, which turned out to correspond to modal logics $\logic{S4.1.2}=\logic{S4}+\Box\Diamond\f\leftrightarrow\Diamond\Box\f$ and $\logic{S4}$ respectively. However their proof needed the additional assumption that \(\mathfrak{a=c}\), i.e. the statement that every infinite maximal almost disjoint family of subsets of $\omega$ has cardinality of continuum $\mathfrak{c}$.

The main part of the proof of the completeness theorem resides in proving that there exists a surjective interior map from $\beta\omega$ (resp.\  $\omega^*)$ onto any finite tree with an additional top element (resp.\ any finite tree) where the topology of the target space is the upset topology. Additionally one also needs to guarantee that the fibers of the map are crowded in \(\beta\omega\setminus\omega\). In this paper we present a simpler construction of the map (and, consequently, prove the aforementioned completeness results). Moreover our proof relies on weaker assumptions, namely that there exists a completely separable maximal almost disjoint family of subsets of $\omega$. It is still an open problem whether completely separable maximal almost disjoint families exist in ZFC.

The content of this paper is organized as follows: in the preliminary section we recall basic notions concerning modal logics and their relational and topological completeness. We then describe the so called interior map strategy (see also \cite{Bezhanishvili:2009}) which we use to prove the main completeness theorems. The introductory part is concluded by recalling some basic facts about the topology of $\beta\omega$ and almost disjoint families. In the second section we construct the interior map and then we use this map in the last section to prove the main completeness results.

\section{Preliminaries}	
Modal logics\footnote{In general a \emph{logic} in a language \(\mathcal L\)
is just a structural consequence relation \(\vdash\) between sets of formulas of \(\mathcal L\) and formulas of \(\mathcal L\).} are presented using the standard modal language which consists of connectives of classical logic, a unary connective $\Box$ expressing necessity, and the $\Diamond$ connective expressing possibility, which is defined as $\Diamond\varphi = \neg \Box\neg\f$. The collection of all modal formulas is denoted as \fm.   Global modal logic $\logic{S4}$ can by axiomatized by any axiomatization of classical logic plus the following schemata of axioms for the modal operator:
\vskip0.3cm
\begin{center}
\begin{tabular}{|c|c|}
	(K) & 	$\Box(\f\to\p)\to(\Box\f\to\Box\p)$ \\ 
	(T)   & 	$\Box\f\to\f$ \\ 
  (4) & 	$\Box\f\to\Box\Box\f$ \\ 
  \end{tabular} 
\end{center}
\vskip0.3cm
and the Necessitation rule $\f/\Box\f$. We write $\Gamma\vdash_\logic{S4}\f$ if there is a proof of $\f$ from the set of formulas $\Gamma$. The modal logic $\logic{S4.1.2}$ has the same presentation with an additional axiom $$\Box\Diamond\f\leftrightarrow\Diamond\Box\f.$$
We write $\vdash_\logic{S4.1.2}$ for the provability relation in $\logic{S4.1.2}$.

\subsection{Topological semantics}
\begin{definition}
Let $\tuple{X,\tau}$ be a topological space. A \emph{topological model $\modelt{M}$} on $\frame{X}$ is a tuple $\tuple{\frame{X},\tau, \force}$,  where \(\force\) is a relation between
elements of \(X\) and modal formulas which, for every $x\in X$ and formulas $\varphi$, $\psi$, satisfies:
\begin{itemize}
	\item[(i)] 
	\(x\force\varphi\wedge\psi\) if and only if \(x\force\varphi\) and 
	\(x\force\psi\).
	\item[(ii)]
	\(x\force\varphi\vee\psi\) if and only if \(x\force\varphi\) or 
	\(x\force\psi\).
	\item[(iii)] 
	\(x\force\varphi\rightarrow\psi\) if and only if 
  \(x\not\force\varphi\) or \(x\force\psi\).
\item[(iv)] $x\force \neg\varphi$ if and only if $x\not\force\varphi$.
\item[(v)] $x\force \Box\f$ if and only if $\exists U\in\tau\, (x\in U\wedge \forall y\in U: y\force \f )$.
\end{itemize}
From the above and the definition of $\Diamond$ we also have:
\begin{itemize}
\item[(vi)] $x\force \Diamond\f$ if and only  $\forall U\in\tau\, (x\in U\to \exists y\in U: y\force \f )$.
\end{itemize}
We say that a (modal) formula $\f$ is \emph{valid in $\modelt{M}$} (or that \(\modelt{M}\) satisfies $\f$, written $\modelt{M}\models\f$) if for every $x\in X$, $x\force \f$. 
\end{definition}

Given a class $\classt{K}$ of topological spaces we define the corresponding consequence relation as follows: $\Gamma\models_\classt{K}\f$ if and only if for every  $\frame{X}\in\classt{K}$ and every topological 
model \(\modelt{M}\) on $X$ if \(\modelt{M}\) satisfies all formulas in $\Gamma$ then it also satisfies $\f$. 
When $\classt{K}=\{\frame{X}\}$ we write $\Gamma\models_\frame{X}\f$ (instead of $\Gamma\models_{\{\frame{X}\}}\f$).

\begin{remark} Given a topological model $\tuple{X,\tau,\force}$, let $||\f||=\set{x\in X}{x\force \f}$. Observe that $||\Box\f||=\mathrm{int}(||\f||)$ and $||\Diamond\f||=\overline{||\f||}$. In other words $\Box$ is interpreted as the topological interior operation and $\Diamond$ as the closure operation.
\end{remark}

\begin{definition} We say that a modal logic $\logic{L}$ is \emph{complete} with respect to the class of spaces \(\classt{K}\) if for every $\Gamma\cup\{\varphi\}\subseteq\fm$ we have $\Gamma\vdash_\logic{L}\varphi\Longleftrightarrow \Gamma\models_\classt{K}\varphi$.
\end{definition}

To describe completeness results for the logics $\logic{{S.4}}$ and $\logic{{S.4.1.2}}$, we first recall that a quasi-order $\tuple{T,\leq}$ is called a \textit{quasi-tree} if the canonical quotient order of $\tuple{T,\leq}$ is a tree.

\begin{notation*} We denote by \(\classt{T}\) (resp. \(\classt{T^*}\)) the class of all finite trees (resp.\ finite trees with an additional top element $*$) regarded as topological spaces with the topology given by upward closed sets
in the tree order. The notation \(\classt{QT}\) (resp. \(\classt{QT^*}\)) stands for the analogous classes corresponding
to quasi-trees instead of trees.
\end{notation*}

Then, the modal logic $\logic{S4}$ (resp.\ $\logic{S4.1.2}$) is known to be complete with respect to the class $\classt{QT}$ (resp.\ $\classt{QT^*}$). In symbols: for every $\Gamma\cup\{\f\}\subseteq\fm$

\begin{equation}\label{E:topcomplS4.1.2}
\Gamma\vdash_\logic{S4}\f\iff \Gamma\models_{\classt{QT}}\f.
\end{equation}
\begin{equation}\label{E:topcomplS4}
\Gamma\vdash_\logic{S4.1.2}\f\iff \Gamma\models_{\classt{QT^*}}\f,
\end{equation}

For proofs and more information on topological semantics for modal logics see~\cite{Aiello:2003} or~\cite{Benthem:2007}.

\subsection{The interior map strategy}\label{Sec:Interior}
We describe a general method (the \emph{interior map strategy}) which can be used to obtain new completeness results with respect to topological spaces from already existing ones (e.g. that $\logic{S4}$ is complete with respect to the Cantor space $\mathds{C}$ or the real line $\mathds{R}$, see \cite{Aiello:2003}).

Recall that a function between two topological spaces if called an \emph{interior map} 
if it is both continuous and open. It is not hard to prove that interior maps preserve topological consequence:

\begin{proposition}[\cite{Gabelaia:2001}]\label{P:interiormap}
Let $f$ be an interior map from $\frame{X}$ onto $\frame{Y}$ and $\Gamma\cup\{\f\}\subseteq\fm$. Then 
$\Gamma\models_\frame{X}\f$ implies $\Gamma\models_\frame{Y}\f$.
\end{proposition}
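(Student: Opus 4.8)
The plan is to unwind the definitions and chase an arbitrary formula through the map. Suppose $f\colon \frame{X}\to\frame{Y}$ is a surjective interior map and fix $\Gamma\cup\{\f\}\subseteq\fm$ with $\Gamma\models_\frame{X}\f$. To show $\Gamma\models_\frame{Y}\f$, take any topological model $\modelt{N}=\tuple{\frame{Y},\sigma,\force_\modelt{N}}$ satisfying every formula in $\Gamma$; I must produce a single point of $Y$ (in fact all of them) forcing $\f$. The natural move is to \emph{pull back} the model along $f$: define a model $\modelt{M}=\tuple{\frame{X},\tau,\force_\modelt{M}}$ on $\frame{X}$ by declaring, for propositional variables $p$, that $x\force_\modelt{M} p$ iff $f(x)\force_\modelt{N} p$. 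Equivalently, $||p||_\modelt{M}=f^{-1}[\,||p||_\modelt{N}\,]$.

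The key step is then the \textbf{truth lemma}: for every formula $\p$ and every $x\in X$, $x\force_\modelt{M}\p$ iff $f(x)\force_\modelt{N}\p$, i.e. $||\p||_\modelt{M}=f^{-1}[\,||\p||_\modelt{N}\,]$. This is proved by induction on the complexity of $\p$. The base case holds by definition, and the Boolean cases ($\wedge$, $\vee$, $\to$, $\neg$) are immediate from clauses (i)--(iv) since preimages commute with the Boolean set operations. The only case with real content is the modal one, $\p=\Box\chi$; by the remark it amounts to the identity
\begin{equation*}
\mathrm{int}_\tau\bigl(f^{-1}[A]\bigr)=f^{-1}\bigl[\mathrm{int}_\sigma(A)\bigr]
\end{equation*}
for $A=||\chi||_\modelt{N}$, which is exactly where interiority of $f$ is used: continuity gives $f^{-1}[\mathrm{int}_\sigma(A)]\subseteq\mathrm{int}_\tau(f^{-1}[A])$ (the preimage of an open subset of $A$ is an open subset of $f^{-1}[A]$), and openness gives the reverse inclusion (if $U$ is open with $U\subseteq f^{-1}[A]$ then $f[U]$ is open and $f[U]\subseteq A$, so $U\subseteq f^{-1}[f[U]]\subseteq f^{-1}[\mathrm{int}_\sigma(A)]$). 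Unwinding the remark's observation $||\Box\chi||=\mathrm{int}(||\chi||)$ and invoking the induction hypothesis $||\chi||_\modelt{M}=f^{-1}[\,||\chi||_\modelt{N}\,]$ then closes the induction; the $\Diamond$ case, if one prefers to handle it directly rather than via $\neg\Box\neg$, dualizes using $||\Diamond\chi||=\overline{||\chi||}$ and the closure version of the same identity.

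With the truth lemma in hand the conclusion is a short computation. Since $\modelt{N}\models\gamma$ for each $\gamma\in\Gamma$, we have $||\gamma||_\modelt{N}=Y$, hence $||\gamma||_\modelt{M}=f^{-1}[Y]=X$, so $\modelt{M}\models\gamma$; thus $\modelt{M}$ satisfies all of $\Gamma$, and by the hypothesis $\Gamma\models_\frame{X}\f$ we get $\modelt{M}\models\f$, i.e. $||\f||_\modelt{M}=X$. Therefore $f^{-1}[\,||\f||_\modelt{N}\,]=X$, and since $f$ is \emph{surjective} this forces $||\f||_\modelt{N}=Y$, i.e. $\modelt{N}\models\f$. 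As $\modelt{N}$ was an arbitrary model of $\Gamma$ on $\frame{Y}$, this gives $\Gamma\models_\frame{Y}\f$. The main obstacle is purely the modal step of the truth lemma, i.e. verifying that $f^{-1}$ commutes with the interior operator, and this is precisely the content of $f$ being an interior map; surjectivity is needed only at the very end to transfer ``true everywhere'' back across the map.
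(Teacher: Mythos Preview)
Your argument is correct and is the standard proof of this fact: pull back the valuation along $f$, establish the truth lemma by induction (the only nontrivial case being $\Box$, where one uses that $f^{-1}$ commutes with the interior operator precisely because $f$ is continuous and open), and finish using surjectivity. The paper does not give its own proof of this proposition; it simply cites \cite{Gabelaia:2001}, so there is nothing to compare against beyond noting that your proof is exactly the expected one.
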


To prove a completeness result we typically need to show that if a formula is not provable, then
there is a model which serves as a counterexample. The above proposition allows us to transfer counterexamples
from one space to another. This is the idea of the \emph{interior map strategy}.

\subsection{The space \(\beta\omega\)}
Recall that the space \(\beta\omega\), the \v{C}ech-Stone compactification of 
the countable discrete space \(\omega\), can be naturally identified 
(via Stone duality), with the set of all ultrafilters on \(\omega\). 
The topology is given by basic (cl)open sets of the form
\[
\hat{A} = \{\mathcal U\in\beta\omega:A\in\mathcal U\},
\]
for some set \(A\subseteq\omega\). The space is a compact, extremally 
disconnected, zero-dimensional, topological space. The subspace 
consisting of all nonprincipal ultrafilters is denoted \(\omega^*\). It is
a compact zerodimensional space homeomorphic to the Stone space of the
algebra \(\mathcal P(\omega)/fin\).

The space \(\omega^*\) is very rich---it contains homeomorphic copies
of many spaces. The following theorem of P. Simon gives a rough idea what to expect.

\begin{theorem*}[\cite{Simon:1985}] Every extremally disconnected compact
space of weight \(\leq 2^\omega\) can be homeomorphically embedded into
\(\omega^*\) such that the image is a weak P-set.
\end{theorem*}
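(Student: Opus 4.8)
\emph{Proof sketch.} One natural route is via Stone duality. Since $K$ is an extremally disconnected compact space of weight $\le 2^\omega$, it is homeomorphic to the Stone space $\mathrm{St}(B)$ of a \emph{complete} Boolean algebra $B$ with $|B|\le\mathfrak c$; and since $\omega^*$ is the Stone space of $\mathcal P(\omega)/\mathrm{fin}$, embedding $K$ into $\omega^*$ as a closed subspace amounts to producing a surjective Boolean homomorphism $q\colon\mathcal P(\omega)/\mathrm{fin}\twoheadrightarrow B$; equivalently, an ideal $I\supseteq\mathrm{fin}$ on $\omega$ together with an isomorphism $\mathcal P(\omega)/I\cong B$, the copy of $K$ being $F=\{u\in\omega^*: u\cap I=\emptyset\}$. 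So it suffices to show: \emph{every complete Boolean algebra of size $\le\mathfrak c$ is of the form $\mathcal P(\omega)/I$ for some ideal $I\supseteq\mathrm{fin}$ which is, moreover, a P-ideal} (every countable $\{A_n:n<\omega\}\subseteq I$ has an $\subseteq^*$-upper bound in $I$). The P-ideal clause is what buys the weak P-set property: if $\{A_n\}$ is a countable family of sets with $\widehat{A_n}\cap F=\emptyset$, pick $A\in I$ with $A_n\subseteq^*A$; then for every $u\in F$ we have $\omega\setminus A\in u$ while $(\omega\setminus A)\cap A_n$ is finite for all $n$, which (unwinding the closure operator in $\omega^*$) says precisely that $u\notin\overline{\bigcup_n\widehat{A_n}}$. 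Hence $F$ misses the closure of every countable subset of $\omega^*\setminus F$.

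I would then build $I$ and the isomorphism by a transfinite recursion of length $|B|\le\mathfrak c$. Fix enumerations $B=\{b_\xi:\xi<\mathfrak c\}$ and $\mathcal P(\omega)=\{X_\xi:\xi<\mathfrak c\}$, and construct increasing continuous chains of countable subalgebras $\mathcal D_\xi\le\mathcal P(\omega)$, countably generated sub-ideals $I_\xi\subseteq\mathcal D_\xi$, and subalgebras $B_\xi\le B$, together with coherent surjections $q_\xi\colon\mathcal D_\xi\to B_\xi$ of kernel $I_\xi$, so that in the end $\bigcup_\xi\mathcal D_\xi=\mathcal P(\omega)$, $\bigcup_\xi B_\xi=B$, and $I=\bigcup_\xi I_\xi$. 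Limit stages are unions; since $\mathfrak c$ is uncountable, any countable subfamily of $I$ already lies in some $I_\xi$, so ``$I$ is a P-ideal'' is arranged by a standard catch-up bookkeeping at successor stages. A successor stage performs one of three tasks: (a) \emph{enlarging the range} — force $b_\xi$ into the image by realizing its quantifier-free type over $B_\xi$ by a subset $Y\subseteq\omega$; this uses the Parovichenko-style fact that every countable configuration of sets, gaps and towers can be realized in $\mathcal P(\omega)/\mathrm{fin}$; (b) \emph{enlarging the domain} — put $X_\xi$ into $\mathcal D_\xi$ and assign $q_{\xi+1}([X_\xi])\in B$ as the supremum, over a suitable maximal antichain of $\mathcal D_\xi$, of the values forced from below and above, which is where \emph{completeness of $B$} is used; (c) \emph{feeding the P-ideal} — given a countable $\{A_n\}\subseteq I_\xi$, throw an $\subseteq^*$-upper bound $A$ into the kernel by declaring $q_{\xi+1}([A])=0$ for a suitable $A\supseteq^*\bigcup_n A_n$, consistently with all commitments made so far.

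The hard part will be task (c): one must check that at every stage there is genuinely ``room'' to enlarge the kernel by an almost-upper-bound of a countably generated piece of it without destroying injectivity or surjectivity of the emerging map onto $B$. The subtlety is that $q_\xi$ is only a Boolean homomorphism, not $\sigma$-complete, so the equations $q_\xi([d\cap A_n])=0$ do not by themselves pin down $q_\xi\big([d\cap\bigcup_n A_n]\big)$; completeness of $B$ is what lets one map the ``$A$-part'' of the countable algebra generated over $\mathcal D_\xi$ to $0$ while re-expressing the previously fixed nonzero values unchanged over the complement of $A$. Making this consistency verification precise — and, relatedly, checking that the three kinds of steps do not interfere with one another — is the core of the argument; everything else is routine back-and-forth bookkeeping. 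Note that completeness of $B$ is essential here and cannot simply be dropped: for general Boolean algebras of size $\mathfrak c>\aleph_1$ it is consistent that they are not quotients of $\mathcal P(\omega)/\mathrm{fin}$ at all.
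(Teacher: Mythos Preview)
The paper does not prove this theorem. It is quoted, with attribution to Simon~\cite{Simon:1985}, purely as background in the preliminaries to convey that \(\omega^*\) is ``very rich''; the statement plays no role in the subsequent construction of the interior map or in the completeness results. There is therefore no proof in the paper for your attempt to be compared against.

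As to the sketch itself: the Stone-dual reformulation (embed \(K\) as a closed subspace of \(\omega^*\) \(\Leftrightarrow\) exhibit \(B\cong\mathcal P(\omega)/I\) for a suitable ideal \(I\supseteq\mathrm{fin}\)) is correct, and identifying the weak P-set condition with \(I\) being a P-ideal is the right translation. The back-and-forth of length \(\mathfrak c\) with the three kinds of successor steps is the standard Parovichenko-type strategy, and you are right that completeness of \(B\) is what makes step~(b) go through and what gives room in step~(c). Two small slips: the subalgebras \(\mathcal D_\xi\) cannot all be \emph{countable} if the construction is continuous of length \(\mathfrak c>\aleph_1\); they should have size \(\leq |\xi|+\aleph_0\). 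And in the weak P-set verification you should start from a countable set of \emph{points} \(p_n\in\omega^*\setminus F\), choose \(A_n\in I\cap p_n\), and then run your argument; as written you start from sets \(A_n\) rather than points, which is not quite the definition. Neither of these is a real obstacle to the intended argument.
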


Also, since each \(\hat{A}\) is obviously isomorphic to \(\beta\omega\), these
copies are in fact dense.

\subsection{Trees}\label{Sub:trees}
We will also need some standard definitions regarding trees. A set \(T\)
together with a partial order \(\leq\) is a \emph{tree} if \(\{s\in T:s\leq t\}\)
is well-ordered by \(\leq\) for every \(t\in T\). Elements of the tree are
called \emph{nodes}. The \emph{height} of a node \(t\in T\), denoted \(ht(t)\),
is the rank of \(\{s\in T:s\leq t\ \&\ s\neq t\}\).
Given \(s\in T\) we write \(\succ_T(s)=\{t\in T:s\leq t\ \&\ ht(t)=ht(s)+1\}\)
for the set of immediate successors of \(s\) in \(T\) and \(T_s=\{t\in T:s\leq t\}\)
for the set of ``descendants'' of \(s\) (note that contrary to common usage
\(T_s\) does \emph{not} include the predecessors of \(s\)).

\subsection{Almost Disjoint Families}
Almost disjoint families will be the main tool in our construction of an interior map. Here we collect some basic facts about them.

\begin{definition} A family \(\mathcal A\subseteq\mathcal [\omega]^\omega\) is 
an \emph{almost disjoint family} (\emph{AD} for short) if \(|A\cap B|<\omega\) 
for each \(A,B\in\mathcal A\). An \emph{infinite} maximal (w.r.t. inclusion) AD
family is called a \emph{maximal almost disjoint family} (\emph{MAD} for short).
\end{definition}

Given an AD family \(\mathcal A\) we denote by \(\mathcal I(\mathcal A)\) the 
ideal on \(\omega\) generated by sets in \(\mathcal A\) together with all finite sets. 
It is natural to ask what are the possible sizes of a MAD family. It is not hard
to see that every MAD family must be uncountable and have size at most 
\(\mathfrak c\). The minimal possible size of a MAD family
\[
 \mathfrak a = \min\{|\mathcal A|:\mathcal A\ \mbox{is a MAD family}\}
\]
is called the \emph{almost disjoint number}. It can consistently be strictly 
smaller than \(\mathfrak c\). However, MAD families of size \(\mathfrak c\) 
always exist. To see this consider the family of branches of the full binary tree 
of height \(\omega\). This is clearly an AD family (on the countable set of nodes of
the binary tree) of size \(\mathfrak c\). Then, by the axiom of choice, it can 
be extended to a MAD family.

The construction of the surjective map in \cite{Bezhanishvili:2009} uses the 
assumption \(\mathfrak a=\mathfrak c\) which guarantees that any small AD family
cannot be maximal. This allows them to recursively construct a MAD family
dealing with \(\mathfrak c\)-many tasks. If \(\mathfrak a<\mathfrak c\), then
the construction might run into a dead-end by constructing a MAD family before 
all tasks are met. 

We will use a different approach, relying on the following concept:

\begin{definition} An AD family \(\mathcal A\) is \emph{completely separable}
if for every \(X\not\in\mathcal I(\mathcal A)\) either \(X\) is almost 
disjoint with each element of \(\mathcal A\) or there is \(A\in\mathcal A\)
such that \(A\subseteq^* X\) or, equivalently, there are \(\mathfrak c\)-may
such \(A\)s (recall that \(A\subseteq^*X\) means that \(|A\setminus X|<\omega\)).
\end{definition}

Notice that if \(\mathcal A\) is a MAD family then the first alternative cannot
hold. A completely separable MAD family will be instrumental in our proof. 
While it is easy to construct a completely separable AD family by transfinite 
recursion, it is a long-standing open question whether there is, 
in ZFC, a completely separable MAD family. Their existence easily follows from 
\(\mathfrak a=\mathfrak c\). Recently (see \cite{Shelah:2011:MAD}, and others), they
have been shown to exist under various much weaker assumption, e.g. if \(\mathfrak c\leq\aleph_\omega\),
but it is conceivable, that their existence is outright provable in ZFC. 
For more on this topic see, e.g., the survey paper \cite{Hrusak:2007}.

Finally, we finish with a simple technical lemma (observation, really), which will be important in the next section.

\begin{lemma}\label{lemma:crowded}
Assume \(\mathcal A\) is an infinite AD family of infinite subsets of \(X\subseteq\omega\). Then the set
\[
 Z=\omega^*\cap\hat{X}\setminus\bigcup\left\{\hat{A}:A\in\mathcal A\right\}
\]
is a compact subspace of \(\omega^*\) without isolated points.
\end{lemma}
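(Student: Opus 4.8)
The plan is to prove the two assertions separately. Compactness is routine: since \(\omega\) is open in \(\beta\omega\) (each singleton is clopen), \(\omega^*\) is closed; and since \(\hat X\) is clopen and every \(\hat A\) is clopen, \(Z=(\hat X\cap\omega^*)\cap\bigcap_{A\in\mathcal A}\widehat{\omega\setminus A}\) is a closed subset of the compact space \(\beta\omega\), hence compact. (Neither almost disjointness nor infinitude of \(\mathcal A\) is needed for this part.)

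The substance is the absence of isolated points, which I would prove by contradiction. Assume \(\mathcal U\in Z\) is isolated in \(Z\); taking a basic clopen set that witnesses this and intersecting it with \(\hat X\), we may assume the witness has the form \(\hat B\) with \(B\subseteq X\), \(B\in\mathcal U\), \(B\) infinite, and \(\hat B\cap Z=\{\mathcal U\}\). Unwinding the definitions, \(\hat B\cap Z\) is exactly the set of nonprincipal ultrafilters extending the filter generated by \(\{B\}\) together with the dual filter of \(\mathcal I(\mathcal A)\); as \(\mathcal U\) is one of them, in particular \(B\notin\mathcal I(\mathcal A)\). I would then produce a second point of \(\hat B\cap Z\) from the following combinatorial claim: \emph{whenever \(B\notin\mathcal I(\mathcal A)\) there is a partition \(B=C\sqcup(B\setminus C)\) with both pieces outside \(\mathcal I(\mathcal A)\).} Granting this, the filters generated by the dual filter of \(\mathcal I(\mathcal A)\) together with \(C\), resp.\ with \(B\setminus C\), are proper---this is precisely non-membership in \(\mathcal I(\mathcal A)\)---and free, so they extend to distinct nonprincipal ultrafilters \(\mathcal V_1,\mathcal V_2\), each containing \(B\) and avoiding every member of \(\mathcal A\), hence both in \(\hat B\cap Z\), contradicting \(\hat B\cap Z=\{\mathcal U\}\).

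To prove the claim I would split on \(\mathcal A_B=\{A\in\mathcal A:|A\cap B|=\omega\}\). If \(\mathcal A_B\) is finite, then \(D=B\setminus\bigcup\mathcal A_B\) is infinite (otherwise \(B\in\mathcal I(\mathcal A)\)) and almost disjoint from every member of \(\mathcal A\); since an infinite set almost disjoint from all of \(\mathcal A\) cannot lie in \(\mathcal I(\mathcal A)\), any partition of \(D\) into two infinite halves yields \(C\). If \(\mathcal A_B\) is infinite, I would choose countably many distinct \(A_i\in\mathcal A_B\), disjointify their traces to obtain pairwise disjoint infinite sets \(\tilde C_i\subseteq A_i\cap B\), split each \(\tilde C_i=E_i\sqcup F_i\) into infinite halves, and set \(C=\bigcup_iE_i\), so that \(B\setminus C\supseteq\bigcup_iF_i\). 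No finite \(\mathcal G\subseteq\mathcal A\) can cover \(C\) modulo finite: choosing \(i\) with \(A_i\notin\mathcal G\), the infinite set \(E_i\subseteq A_i\) meets each member of \(\mathcal G\) in a finite set, so \(E_i\cap\bigcup\mathcal G\) is finite---a contradiction. The same argument applies to \(\bigcup_iF_i\), and downward closure of \(\mathcal I(\mathcal A)\) gives \(B\setminus C\notin\mathcal I(\mathcal A)\) as well.

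The hard part will be the infinite case. The naive attempt---take \(C=\bigcup_iC_i'\) for the raw traces \(C_i=A_i\cap B\) and some splitting \(C_i=C_i'\sqcup C_i''\)---fails, because the \(C_i\) are only \emph{almost} disjoint, so \(C_i''\) may meet \(\bigcup_{j\neq i}C_j'\) in an infinite set and \(B\setminus C\) need not contain \(\bigcup_iC_i''\) modulo finite; disjointifying the traces first, at the cost of deleting only finitely many points from each, repairs this. I would also remark that almost disjointness is genuinely used here: ``\(B\notin\mathcal I\) implies \(B\) splits into two \(\mathcal I\)-positive pieces'' is false for an arbitrary ideal (for instance a maximal one), so some special structure of \(\mathcal I(\mathcal A)\) must be exploited.
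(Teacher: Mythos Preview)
Your proof is correct and follows essentially the same approach as the paper: the compactness argument is identical, and the core of the no-isolated-points argument---disjointify countably many traces $A_i\cap B$, split each into two infinite halves, and observe that neither union can be almost covered by finitely many members of $\mathcal A$---is exactly what the paper does. The only cosmetic differences are that the paper splits cases on whether the point $p$ lies in $\overline{\bigcup_{A\in\mathcal A}\hat A}$ (rather than on whether $\mathcal A_B$ is finite), and in its easy case simply observes that $p$ has a clopen $\omega^*$-neighbourhood contained in $Z$; your extraction of the clean combinatorial claim ``every $\mathcal I(\mathcal A)$-positive set splits into two positive pieces'' is a tidy repackaging of the same content.
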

\begin{proof} The set is closed and hence compact. To see that it has no
isolated points let \(p\in Z\) and \(\hat{B}\) a neighbourhood of \(p\). There are two cases. Either
\[
p\not\in\overline{\bigcup\left\{\hat{A}:A\in\mathcal A\right\}}.
\]
Then there must be an \(C\in p\) such that \(C\) is almost disjoint from \(\mathcal A\). Then \(\hat{B}\cap \hat{C}\cap \hat{X}\cap\omega^*\subseteq Z\) is a clopen neighbourhood of \(p\) in \(\omega^*\) so, in particular, \(p\) is not
an isolated point.

Dealing with the second case assume that 
\[
 p\in\overline{\bigcup\left\{\hat{A}:A\in\mathcal A\right\}}\setminus\bigcup\left\{\hat{A}:A\in\mathcal A\right\}
\]
It follows that \(B\) can't be almost disjoint from \(\mathcal A\) and can't be almost covered by finitely many elements of \(\mathcal A\) either. Pick a countable family \(\{A_n:n<\omega\}\subseteq\mathcal A\) such that \(B^\prime_n=B\cap A_n\) is
infinite for each \(n\). Let 
\[
 B_n=B^\prime_n\setminus\bigcup_{i<n}B^\prime_i
\]
and partition each \(B_n\) into two disjoint infinite subsets \(C_n\) and \(D_n\) and let
\[
 C=\bigcup_{n<\omega} C_n,\quad D=\bigcup_{n<\omega}D_n.
\]
Then \(C,D\subseteq X\cap B\) are disjoint so at most one can be an element of \(p\). Without loss of generality assume that \(C\not\in p\). Notice that \(C\) cannot be almost covered by finitely many elements of \(\mathcal A\) (because it hits each \(A_n\) in an infinite set and \(\mathcal A\) is almost disjoint) so 
\[
 E=\hat{C}\setminus\bigcup\left\{\hat{A}:A\in\mathcal A\right\}\neq\emptyset.
\]
In particular there is \(q\in E\cap\hat{B}\). Since \(E\subseteq C\not\in p\) necessarily \(q\neq p\). So
\(|\hat{B}\cap Z|\geq 2\). Since \(\hat{B}\) was an arbitrary clopen neighbourhood of \(p\) this finishes the proof 
that \(p\) is not isolated in \(Z\).
\end{proof}

\section{The interior map construction}

The aim of this section is to prove the following theorem:

\begin{theorem} \label{Thm:tree} 
Assume there is a completely separable MAD family. Then for every $T\in\classt{T^*}$ there is a surjective interior map $f$ from $\beta\omega$ onto $T$.
\end{theorem}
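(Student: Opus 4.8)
The plan is to build the interior map $f\colon\beta\omega\to T$ recursively along the tree order of $T\in\classt{T^*}$, assigning to each node $t$ a subset $A_t\subseteq\omega$ so that $f^{-1}(T_t)$ is (essentially) $\hat{A_t}$, and then pushing the principal points of $\beta\omega$ down to the top element $*$. Write $T=\hat T\cup\{*\}$ where $\hat T$ is a finite tree with root $r$; we want $f(\omega)=\{*\}$ and, for $x\in\omega^*$, $f(x)=t$ for exactly one $t\in\hat T$. Since the topology on $T$ is the upset topology, $f$ is continuous iff $f^{-1}(T_t)$ is open in $\beta\omega$ for every $t$, and $f$ is open iff it maps basic clopen sets $\hat B$ to upsets; because $T$ is finite it will suffice to arrange that for every $x\in\omega^*$ and every $B\in x$ the image $f[\hat B]$ contains the whole cone $T_{f(x)}$. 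The natural way to get both properties at once is to make $f^{-1}(T_t)$ clopen$\cap\omega^*$ together with all of $\omega$ for $t=r$, i.e.\ to produce a decreasing-along-the-tree system of clopen sets with the right maximality features.

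First I would handle the top element: set $f(x)=*$ for every $x\in\omega$ (the principal ultrafilters), so $f^{-1}(*)=\omega$, which is open, and $f^{-1}(\hat T)=\omega^*$, which is closed; openness at a principal point $n$ is automatic since $\{n\}$ is open and maps into $T_*=\{*\}$. Then I would distribute $\omega^*$ over $\hat T$. Fix a completely separable MAD family $\mathcal A$. The key recursive step: given a node $s$ with an assigned infinite set $A_s$ (starting with $A_r=\omega$), split the "remaining" part of $\hat{A_s}\cap\omega^*$ among the immediate successors $\succ_T(s)=\{s_1,\dots,s_k\}$ by choosing, inside $A_s$, an infinite AD subfamily $\{A_{s_1}^{(j)},A_{s_2}^{(j)},\dots\}_{j}$... more precisely: partition $A_s$ (or an appropriate subset) so that each successor $s_i$ gets infinitely many almost disjoint pieces whose union $A_{s_i}\subseteq A_s$ is infinite, the $A_{s_i}$ being pairwise almost disjoint, and $A_s\setminus\bigcup_i A_{s_i}$ being almost disjoint from all the pieces. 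Declare $f(x)=s$ for those $x\in\hat{A_s}\cap\omega^*$ that do not lie in any $\hat{A_{s_i}}$ and are not in the closure of $\bigcup_i\bigcup_j\hat{A_{s_i}^{(j)}}$; this is exactly the set "$Z$" of Lemma~\ref{lemma:crowded} relative to $X=A_s$ and the AD family of the pieces, which guarantees $f^{-1}(s)$ is crowded in $\omega^*$, as required for the completeness application. Recurse into each $\hat{A_{s_i}}$. At the leaves, assign every remaining point of $\hat{A_{\text{leaf}}}\cap\omega^*$ to that leaf.

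The main obstacle — and where complete separability is essential — is verifying openness, i.e.\ that for an arbitrary $x\in\omega^*$ with $f(x)=t$ and an arbitrary $B\in x$, the image $f[\hat B\cap\omega^*]$ contains every successor node of $t$ (and hence, by induction down the tree, the whole cone $T_t$). Concretely, if $f(x)=s$ then $B$ meets $A_s$ in an infinite set $B\cap A_s\notin\mathcal I(\mathcal A)$-ish, and complete separability of $\mathcal A$ lets us find inside $B\cap A_s$ a set that almost contains some member $A\in\mathcal A$ that was used as one of the pieces $A_{s_i}^{(j)}$ feeding successor $s_i$ — forcing $\hat B$ to meet $f^{-1}(s_i)$, and similarly all the way down; and if $f(x)$ is a leaf, $\hat B\cap f^{-1}(f(x))\neq\emptyset$ because that fiber is crowded (Lemma~\ref{lemma:crowded}) and clopen-dense in the relevant cone. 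The delicate point is arranging the recursion so that the pieces handed to successors are actually elements (or almost-subsets of elements) of the fixed completely separable $\mathcal A$, so that its "$A\subseteq^* X$" dichotomy can be invoked at every node; managing this bookkeeping while keeping all the finitely many branch-requirements compatible is the crux of the construction, and is exactly what replaces the $\mathfrak{a}=\mathfrak{c}$ recursion of \cite{Bezhanishvili:2009}. Continuity, by contrast, is easy: $f^{-1}(T_t)$ will by construction be $\hat{A_t}\cap\omega^*$ for non-root $t$ and all of $\beta\omega$ for $t=r$, both open.
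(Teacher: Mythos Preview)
Your plan has the right skeleton---map $\omega$ to $*$, build the rest of $f$ recursively along $\hat T$, identify continuity as easy and openness as the place where complete separability enters---but the specific recursive scheme you describe has a structural gap at exactly the ``delicate point'' you flag.

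You assign to each node a single \emph{set} $A_t$ and propose to distribute, among the successors $s_1,\dots,s_k$ of $s$, \emph{elements} of your fixed completely separable MAD $\mathcal A$ (the pieces $A_{s_i}^{(j)}$). For openness you then want: given $B\in x$ with $f(x)=s$, complete separability of $\mathcal A$ produces some $A\in\mathcal A$ with $A\subseteq^* B\cap A_s$, and this $A$ is one of the pieces feeding some successor $s_i$. The problem is that this hits \emph{one} successor, not all of them. Complete separability gives you many elements of $\mathcal A$ below $B\cap A_s$, but nothing prevents all of them from being pieces assigned to the same successor; an arbitrary partition of $\mathcal A$ into $k$ subfamilies need not have the property that every $\mathcal I(\mathcal A)$-positive set almost-contains a member of each subfamily. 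So ``forcing $\hat B$ to meet $f^{-1}(s_i)$'' works for some $i$, not for every $i$, and the induction down the tree stalls.

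The paper's construction avoids this by two changes. First, it attaches to each node $t$ an almost disjoint \emph{family} $\mathcal A_t$ rather than a single set, and---this is the key move---it partitions each \emph{element} $A\in\mathcal A_s$ into pieces $A(t)$, one for each successor $t$, rather than partitioning the family. Second, it puts a \emph{fresh} completely separable MAD family $\mathcal A(t,A)$ on each piece $A(t)$ (the hypothesis gives one on $\omega$, hence on any infinite set). Now the openness argument goes through cleanly: if $f(p)=s$ and $B\in p$, then $Y=B\cap A(s)$ is positive for the completely separable MAD $\mathcal A(s,A)$ on $A(s)$, so some $B'\in\mathcal A(s,A)\subseteq\mathcal A_s$ satisfies $B'\subseteq^* Y\subseteq B$; since $B'$ is itself partitioned as $\bigcup_{t}B'(t)$, every successor $t$ gets the piece $B'(t)\subseteq^* B$ at once. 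One application of complete separability, all successors hit. Your bookkeeping difficulty disappears because the partition happens \emph{inside} each MAD element, not across the family.
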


\begin{proof}
For each \(t\in T\) we will recursively construct an almost disjoint family \(\mathcal A_t\) as follows. 

First, for notational convenience, we introduce a unique formal predecessor of \(\emptyset\) in \(T\) and we denote it by \(-1\). For this element we let \(\mathcal A_{-1}=\{\omega\}\). Assume now that \(\mathcal A_s\) has been constructed for some non-leaf node \(s\). Partition each element \(A\in\mathcal A_s\) arbitrarily into \(|succ_T(s)|\)-many infinite pieces:
\[
 A = \bigcup_{t\in succ_T(s)} A(t),
\]
and for each \(t\in succ_T(s)\) and \(A\in\mathcal A_s\) let \(\mathcal A(t,A)\) be a completely separable MAD family on \(A(t)\), which exists by our assumption. Finally, for \(t\in succ_T(s)\) let 
\[
 \mathcal A_t=\bigcup_{A\in\mathcal A_s} \mathcal A(t, A).
\]
This finishes the recursive construction.

Next we define the map \(f\). The isolated points in \(\beta\omega\) are mapped to the top element, and for 
each \(t\in succ_T(s)\) we map 
\begin{equation}\label{eq:nonleaf}
  \omega^*\cap\bigcup_{A\in\mathcal A_s} \hat{A}(t)\setminus\bigcup\left\{\hat{B}:B\in\mathcal A(t,A)\right\}
\end{equation}
onto \(t\). Finally, if \(t\in succ_T(s)\) is a leaf node, we also map
\begin{equation}\label{eq:leaf}
 \omega^*\cap\bigcup\left\{\hat{B}:B\in\mathcal A(t,A)\right\}
\end{equation}
onto \(t\).
It is clear that the map is well-defined.
\begin{claim} If \(t\in succ_T(s)\) then 
\[
  f^{-1}[T_t] = \omega^*\cap \bigcup_{A\in\mathcal A_s} \hat{A}(t)
\]
\end{claim}
\begin{proof}[proof of Claim]
We prove the claim by induction on the distance of \(t\) from the top. For leaf
nodes (distance 0), the claim follows immediately from \ref{eq:nonleaf}.
Note that
\[
f^{-1}[T_t] = \left(f^{-1}(t)\cup \bigcup_{t^\prime\in succ_T(t)}f^{-1}[T_{t^\prime}].
 \right)
\]
Expanding the definitions we get
\begin{equation}\label{eq:expanded}
 f^{-1}[T_t] = \omega^*\cap\left(
    \bigcup_{A\in\mathcal A_s} \hat{A}(t)\setminus\bigcup\left\{\hat{B}:B\in\mathcal A(t,A)\right\}\cup
    \bigcup_{t^\prime\in succ_T(t)}f^{-1}[T_{t^\prime}]
 \right),
\end{equation}
finally, by the inductive assumption (applied to \(s=t, t=t^\prime\)) and by the
definition of \(\mathcal A_t\), we have
\begin{equation}\label{eq:inductive}
 f^{-1}[T_{t^\prime}] = \omega^*\cap\bigcup_{B\in \mathcal A_t}\hat{B}(t) 
		    = \omega^*\cap\bigcup_{A\in\mathcal A_s}\bigcup_{B\in\mathcal A(t,A)}\hat{B}(t)
\end{equation}
Putting together (\ref{eq:expanded}) and (\ref{eq:inductive}) gives the claim.
\end{proof}
From the claim it immediately follows that the map \(f\) is continuous. It is clearly onto so it remains to show that it is open. First it is clear that the image of every open set contains the top element. Now let \(X\in[\omega]^\omega\) and assume that \(t\in f[\hat{X}]\) for some \(t\in T\). We need to show that \(T_t\subseteq f[\hat{X}]\). Again, we prove this by downward induction on the distance of \(t\) from the top. For leaf nodes there is nothing to prove. So assume \(s\in T\)
and that for \(t\in succ_T(s)\) and any \(Y\) with \(t\in f[\hat{Y}]\) we have, by the inductive assumption, \(T_t\subseteq f[\hat{Y}]\). It is then enough to show that \(t\in f[X]\) for every \(t\in succ_T(s)\). Pick a \(p\in\hat{X}\)
such that \(f(p)=s\). Then, by (\ref{eq:nonleaf}), we have
\[
 p\in\bigcup_{A\in\mathcal A_{s^-}} \hat{A}(s)\setminus\bigcup\left\{\hat{B}:B\in\mathcal A(s,A)\right\},
\]
where \(s^-\) is the predecessor of \(s\) in \(T\) (the predecessor of the root element being the formal element \(-1\)).
In particular for some \(A\in\mathcal A_{s^-}\), \(p\in \hat{A}(s)\). Let \(Y=X\cap A(s)\). Since \(f(p)=s\), 
necessarily 
\[
 p\not\in\bigcup\left\{\hat{B}:B\in\mathcal A(s,A(s))\right\}.
\]
It follows that \(Y\) cannot be covered by finitely many elements of the completely separable MAD \(\mathcal A(s,A(s))\) on \(A(s)\). By complete separability, there must be \(B\in\mathcal A(s,A(s))\) such that \(B\subseteq^* Y\subseteq X\). So we have \(B(t)\subseteq B\) and \(t\in f[\hat{B}(t)]\) so, a fortiori, \(t\in f[\hat{X}]\).
\end{proof}

\begin{corollary} \label{Cor:toppedquasitree} 
For every $X\in\classt{QT^*}$ there is a surjective interior map $f$ from $\beta\omega$ onto $X$.
\end{corollary}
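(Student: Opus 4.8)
The plan is to deduce this from Theorem~\ref{Thm:tree} by ``collapsing clusters''. Given $X\in\classt{QT^*}$, let ${\sim}$ be the equivalence relation $x\sim y\iff x\le y\le x$ and let $T=X/{\sim}$ with quotient map $q\colon X\to T$; since $X$ is a finite quasi-tree with an additional top element (which is its own ${\sim}$-class), $T$ is a finite tree with an additional top element, i.e.\ $T\in\classt{T^*}$. Two routine observations drive the argument. First, the open (= upward closed) subsets of $X$ are exactly the sets $q^{-1}[W]$ for $W$ open in $T$; hence if $f\colon\beta\omega\to T$ is the surjective interior map given by Theorem~\ref{Thm:tree} and $g\colon\beta\omega\to X$ is \emph{any} map with $q\circ g=f$, then $g$ is automatically continuous (as $g^{-1}[q^{-1}[W]]=f^{-1}[W]$) and is surjective as soon as every ${\sim}$-class lies in its range. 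Second, because $X$ is finite and any two points of a common ${\sim}$-class have the same neighbourhoods, such a $g$ is \emph{open} precisely when, for each $t\in T$, the fibres $g^{-1}(c)$ over the various $c$ in the cluster $C_t:=q^{-1}(t)$ all have the same closure in $\beta\omega$; equivalently, each $g^{-1}(c)$ is dense in $\overline{f^{-1}(t)}$.

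So it suffices, for each $t\in T$, to split the fibre $f^{-1}(t)$ into $|C_t|$ subsets each dense in $\overline{f^{-1}(t)}$ and to let $g$ send the $i$-th piece to the $i$-th element of $C_t$ (for the top element $C_t$ is a singleton and we simply set $g=f$ on the isolated points of $\beta\omega$). Inspecting the construction of $f$ in the proof of Theorem~\ref{Thm:tree}, we see that for every non-top $t\in\succ_T(s)$ the fibre $f^{-1}(t)$ is a disjoint union $\bigsqcup_{A\in\mathcal A_s}Y_A$ of sets sitting inside the pairwise (mod finite) disjoint clopen sets $\widehat{A(t)}$, where $Y_A$ is a homeomorphic copy of $\omega^*$ if $t$ is a leaf and is $\omega^*\cap\widehat{A(t)}\setminus\bigcup\{\hat B:B\in\mathcal A(t,A)\}$ otherwise; in either case Lemma~\ref{lemma:crowded} shows $Y_A$ is a compact crowded subspace of $\omega^*$.

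The one point that needs a small argument is that every nonempty relative clopen piece $\hat C\cap f^{-1}(t)$ has size at least $\mathfrak c$: it contains some nonempty $\hat C\cap Y_A$, which is clopen in $Y_A$, hence compact and crowded, and a nonempty crowded compact Hausdorff space has cardinality $\ge\mathfrak c$ by the standard Cantor-scheme argument. Since $\beta\omega$ is zero-dimensional, these pieces form a $\pi$-base for $f^{-1}(t)$ of size $\le\mathfrak c$, so a transfinite recursion of length $\mathfrak c$ that at each step drops $|C_t|$ previously unused points of the current piece into the $|C_t|$ classes (possible as $|C_t|$ is finite and fewer than $\mathfrak c$ points have been used so far) produces the desired partition of $f^{-1}(t)$ into $|C_t|$ dense pieces; distributing the leftover points arbitrarily keeps the pieces dense. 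Gluing the resulting maps over all $t\in T$ yields the required surjective interior $g\colon\beta\omega\to X$. The only genuine subtleties I foresee are the two ``routine observations'' of the first paragraph --- in particular proving carefully that openness of $g$ is equivalent to the equal-closure condition on cluster fibres --- together with the bookkeeping of the recursion; all the topological content (crowdedness, compactness, and hence size of the pieces) has already been isolated in Lemma~\ref{lemma:crowded}.
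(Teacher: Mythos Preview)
Your proposal is correct and follows essentially the same route as the paper: factor through the quotient tree \(T=X/{\sim}\), apply Theorem~\ref{Thm:tree} to get \(f\colon\beta\omega\to T\), and lift \(f\) to \(g\colon\beta\omega\to X\) by splitting each fibre \(f^{-1}(t)\) into \(|C_t|\) dense pieces, using Lemma~\ref{lemma:crowded} to see that the fibres are (locally compact) crowded Hausdorff. The only difference is that the paper obtains the needed \(k\)-resolvability of \(f^{-1}(t)\) by a one-line citation of Hewitt \cite{Hewitt:1943}, whereas you reprove this by the standard \(\mathfrak c\)-length transfinite recursion over a \(\pi\)-base; both arguments rest on exactly the same topological input.
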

\begin{proof} This is precisely the Main Lemma and Corollary~4.10 from \cite{Bezhanishvili:2009}.
The idea is the following: let $X\in\classt{QT^*}$ be given. Then there is a projection map $\pi$ from $X$ onto its quotient order $X/_\approx$. This projection is an interior map with respect to the corresponding upset topologies. By Theorem~\ref{Thm:tree} there is an interior onto map $f:\beta\omega\to X/_\approx$.
We prove there is a surjective interior map $g\colon\beta\omega\to X$ such that the following commute:
\begin{center}
\begin{tikzcd}
\beta\omega \arrow[dashrightarrow ,d, "g"'] \arrow[r,two heads, "f"] & X/_\approx \\
X \arrow[ ur,two heads, "\pi"']& 
\end{tikzcd}
\end{center}
Such a map exists because for each $*\neq x\in X/_\approx$ the subspace $f^{-1}(x)$ is $k$-resolvable (i.e.\ it is a union of $k$-many disjoint dense subsets) for every natural number $k$, in particular for $k=|\pi^{-1}(x)|$. Indeed looking at (\ref{eq:nonleaf}) in the definition of \(f\) in Theorem~\ref{Thm:tree} and lemma \ref{lemma:crowded}, $f^{-1}(x)$ is locally compact and has no isolated points. It is obviously Hausdorff so it is $k$-resolvable by \cite[see comments on p. 332]{Hewitt:1943}.
\end{proof}

A straightforward modification of the above proofs gives (just restricting the map to \(\omega^*\) whose image is
the tree without the top element):

\begin{corollary}\label{Cor:quasitree}
For every $X\in\classt{QT}$ there is a surjective interior map $f$ from $\omega^*$ onto $X$.
\end{corollary}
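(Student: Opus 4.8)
The plan is to reuse the construction of Theorem~\ref{Thm:tree} and the refinement in Corollary~\ref{Cor:toppedquasitree} essentially verbatim, observing that all the relevant structure was built inside \(\omega^*\) in the first place. Concretely, given \(X\in\classt{QT}\), form the tree \(X'=X\cup\{*\}\) obtained by adjoining a new top element \(*\) above everything; this is a member of \(\classt{QT^*}\). By Corollary~\ref{Cor:toppedquasitree} there is a surjective interior map \(g\colon\beta\omega\to X'\). The claim is that the restriction \(f=g\restriction\omega^*\colon\omega^*\to X\) is the desired surjective interior map.

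First I would check surjectivity and the target. Looking at how \(g\) was defined (via \(f\) in Theorem~\ref{Thm:tree} and then lifted through \(\pi\)), only the isolated points of \(\beta\omega\)---i.e.\ the points of \(\omega\)---are sent to the top element \(*\); every point of \(\omega^*\) is sent into \(X\) by the formulas (\ref{eq:nonleaf}) and (\ref{eq:leaf}), and conversely each node of \(X\) receives some point of \(\omega^*\) because the fibers described there are nonempty (indeed crowded, by Lemma~\ref{lemma:crowded}). Hence \(g^{-1}[X]=\omega^*\) and \(f\) is onto \(X\). Next, continuity of \(f\): since \(\omega^*\) carries the subspace topology from \(\beta\omega\) and \(g\) is continuous, the restriction is automatically continuous.

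The one point needing a word of justification is openness. The basic open subsets of \(\omega^*\) are the traces \(\hat A\cap\omega^*\) for \(A\in[\omega]^\omega\); I must show \(f[\hat A\cap\omega^*]\) is upward closed in \(X\). But this is exactly the content of the ``openness'' half of the proof of Theorem~\ref{Thm:tree}: there it is shown that if \(t\in f[\hat X]\) for a non-top node \(t\), then \(T_t\subseteq f[\hat X]\), and the witnessing points are always found in \(\omega^*\) (the argument picks \(p\in\hat X\cap\omega^*\) with \(f(p)=s\) and produces, using complete separability, a \(B\in\mathcal A(s,A(s))\) with \(B\subseteq^* X\), whence \(t\in f[\hat B(t)\cap\omega^*]\subseteq f[\hat X\cap\omega^*]\)). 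The same reasoning applies here for every node of \(X\), since in \(X\) there is no top element to treat separately. Thus \(f[\hat A\cap\omega^*]\) is an upset, i.e.\ open in \(X\), and \(f\) is open.

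I do not expect a genuine obstacle here; the only thing to be careful about is the bookkeeping for quasi-trees, namely that the passage \(X\rightsquigarrow X'=X\cup\{*\}\) does not disturb the quotient structure (the new top element forms its own \(\approx\)-class, so \(X'/_\approx=(X/_\approx)\cup\{*\}\in\classt{T^*}\)), and that the commuting-triangle argument of Corollary~\ref{Cor:toppedquasitree} restricts cleanly to \(\omega^*\). Both are immediate once one notes that all the fibers \(f^{-1}(x)\) involved, for \(x\neq *\), already live in \(\omega^*\) and are locally compact without isolated points there, so the \(k\)-resolvability argument goes through unchanged. Hence the ``straightforward modification'' amounts to deleting the clause about isolated points and the top element from the earlier proofs.
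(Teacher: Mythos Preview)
Your proposal is correct and follows exactly the route the paper indicates: adjoin a top element to land in \(\classt{QT^*}\), invoke the earlier construction, and restrict the resulting map to \(\omega^*\), noting that only the isolated points were sent to \(*\). Your detailed verification of surjectivity, continuity, and openness is sound; in fact openness can be seen even more directly by observing that \(f[\hat A\cap\omega^*]=g[\hat A]\cap X\), which is open in the subspace (= upset) topology on \(X\) since \(g\) is open on \(\beta\omega\).
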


\section{Completeness results}

\begin{theorem}\label{Thm:completenessBetaOmega}
The Modal logic $\logic{S4.1.2}$ is the logic of the space $\beta\omega$.
\end{theorem}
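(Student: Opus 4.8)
The plan is to prove both directions of the completeness equivalence $\Gamma\vdash_\logic{S4.1.2}\f\iff\Gamma\models_{\beta\omega}\f$ separately, leveraging the machinery already assembled. The soundness direction ($\Longrightarrow$) is the easy half: one checks that every theorem of $\logic{S4.1.2}$ is valid in any topological model, and additionally that the characteristic axiom $\Box\Diamond\f\leftrightarrow\Diamond\Box\f$ holds in $\beta\omega$ because $\beta\omega$ is extremally disconnected — in an extremally disconnected space the closure of an open set is open, which forces $\mathrm{int}(\overline{\|\f\|})=\overline{\mathrm{int}(\|\f\|)}$ for all $\f$, i.e. $\|\Box\Diamond\f\|=\|\Diamond\Box\f\|$. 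Combined with the standard fact that $\logic{S4}$ is sound with respect to all topological spaces, this gives $\Gamma\vdash_\logic{S4.1.2}\f\Rightarrow\Gamma\models_{\beta\omega}\f$.

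For the completeness direction ($\Longleftarrow$), I would argue by contraposition: suppose $\Gamma\not\vdash_\logic{S4.1.2}\f$. By the known relational/topological completeness of $\logic{S4.1.2}$ with respect to the class $\classt{QT^*}$ of finite quasi-trees with a top element, recorded as (\ref{E:topcomplS4}) in the preliminaries, there is some $X\in\classt{QT^*}$ and a topological model $\modelt{M}$ on $X$ witnessing $\Gamma\not\models_X\f$ — that is, $\modelt{M}$ satisfies every formula in $\Gamma$ but for some point $x\in X$ we have $x\not\force\f$. Now invoke Corollary~\ref{Cor:toppedquasitree}: there is a surjective interior map $g\colon\beta\omega\to X$. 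By Proposition~\ref{P:interiormap}, the interior map $g$ transfers this counterexample downward the wrong way around, so I must be careful about the direction: the proposition states $\Gamma\models_\frame{X}\f$ implies $\Gamma\models_\frame{Y}\f$ when $f$ maps $X$ onto $Y$, so to conclude $\Gamma\not\models_{\beta\omega}\f$ from $\Gamma\not\models_X\f$ I apply it contrapositively with the roles $\frame{X}:=\beta\omega$, $\frame{Y}:=X$, and $f:=g$: since $g$ is a surjective interior map from $\beta\omega$ onto $X$ and $\Gamma\not\models_X\f$, we get $\Gamma\not\models_{\beta\omega}\f$, as desired.

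Spelling out the mechanics of that last step (which is really the content of Proposition~\ref{P:interiormap}): pull back the valuation along $g$ by declaring, for a propositional variable $p$, that $y\force_{\beta\omega}p$ iff $g(y)\force_X p$; equivalently $\|p\|_{\beta\omega}=g^{-1}[\|p\|_X]$. An induction on formula complexity, using continuity of $g$ for the $\Box$ (equivalently $\mathrm{int}$) clause and openness of $g$ together with surjectivity for the $\Diamond$ (equivalently closure) clause, shows $\|\f\|_{\beta\omega}=g^{-1}[\|\f\|_X]$ for every modal formula $\f$. Hence every $\gamma\in\Gamma$ is still valid in the pulled-back model on $\beta\omega$ (as $g^{-1}[X]=\beta\omega$), while picking any $y\in g^{-1}(x)$ — nonempty by surjectivity — gives $y\not\force_{\beta\omega}\f$. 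This completes the contrapositive and hence the theorem.

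The only genuine obstacle is the soundness half, specifically verifying that $\Box\Diamond\f\leftrightarrow\Diamond\Box\f$ is valid in $\beta\omega$; everything else is an assembly of cited results. This obstacle is mild: it reduces to the purely topological statement that $\beta\omega$ is extremally disconnected (stated in the preliminaries, Section on the space $\beta\omega$), and the identity $\mathrm{int}(\overline{U})=\overline{\mathrm{int}(U)}$ for arbitrary $U$ follows since $\overline{\mathrm{int}(U)}$ is clopen, contained in $\overline{U}$, hence contained in $\mathrm{int}(\overline{U})$, while conversely $\mathrm{int}(\overline{U})$ is an open set whose closure equals $\overline{U}$'s closure and which therefore cannot exceed $\overline{\mathrm{int}(U)}$ — the two containments giving equality. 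One should also remark that, strictly, $\logic{S4.1.2}$ being "the logic of $\beta\omega$" combines this completeness statement with the obvious soundness, so the theorem's phrasing is just a compact restatement of the displayed equivalence.
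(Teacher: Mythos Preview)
Your completeness half is correct and matches the paper exactly: contrapose, invoke the known completeness of $\logic{S4.1.2}$ with respect to $\classt{QT^*}$, then pull back along the surjective interior map supplied by Corollary~\ref{Cor:toppedquasitree} via Proposition~\ref{P:interiormap}.

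There is, however, a genuine gap in your soundness argument. Extremal disconnectedness of $\beta\omega$ gives only \emph{one} direction of the biconditional, namely $\Diamond\Box\f\to\Box\Diamond\f$: your first containment $\overline{\mathrm{int}(U)}\subseteq\mathrm{int}(\overline{U})$ is correct for exactly the reason you state. But your argument for the reverse containment $\mathrm{int}(\overline{U})\subseteq\overline{\mathrm{int}(U)}$ is faulty. You claim that $\mathrm{int}(\overline{U})$ ``is an open set whose closure equals $\overline{U}$'s closure'', but this is false in general (take $U$ a single non-isolated point), and even when it holds the conclusion does not follow. In fact the implication $\Box\Diamond\f\to\Diamond\Box\f$ \emph{fails} in extremally disconnected spaces without isolated points: if $X$ is compact Hausdorff extremally disconnected with no isolated points (e.g.\ the Gleason cover of $[0,1]$), then $X$ is resolvable, so there is a dense $U\subseteq X$ with dense complement; for this $U$ one has $\mathrm{int}(\overline{U})=X$ while $\overline{\mathrm{int}(U)}=\emptyset$.

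The paper handles the two directions separately: $\Diamond\Box\f\to\Box\Diamond\f$ via extremal disconnectedness (as you do), and $\Box\Diamond\f\to\Diamond\Box\f$ via the fact that $\beta\omega$ has a \emph{dense set of isolated points} (namely $\omega$). The latter is what you are missing: if $D$ is the dense set of isolated points, then $\mathrm{int}(\overline{U})\cap D\subseteq U\cap D\subseteq\mathrm{int}(U)$, and since $D$ is dense in the open set $\mathrm{int}(\overline{U})$ this yields $\mathrm{int}(\overline{U})\subseteq\overline{\mathrm{int}(U)}$.
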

\begin{proof}
Completeness: suppose $\Gamma\not\vdash_{S4.1.2}\f$, then by~\eqref{E:topcomplS4.1.2} we obtain $\Gamma\not\models_\classt{QT^*}\f$. In other words for some $X\in\classt{QT^*}$ we have $\Gamma\not\models_X\f$. By Proposition~\ref{P:interiormap} and Corollary~\ref{Cor:toppedquasitree} we conclude that $\Gamma\not\models_{\beta\omega}\f$.

Soundness: From $\Gamma\vdash_{S4.1.2}\f$ we want to prove $\Gamma\models_{\beta\omega}\f$. This is an easy inductive argument on the complexity of the proof. In more detail: the axiom $\Box\Diamond \f\to\Diamond\Box\f$ is valid in every space with a dense set of isolated points (see \cite{Bezhanishvilli:2003}). The axiom $\Diamond\Box\f\to\Box\Diamond\f$ is valid precisely in extremally disconnected spaces (see \cite{Gabelaia:2001}).
\end{proof}

\begin{theorem} The Modal logic $\logic{S4}$ is the logic of the space $\omega^*$.
\end{theorem}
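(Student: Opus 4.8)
The plan is to mirror the proof of Theorem~\ref{Thm:completenessBetaOmega} almost verbatim, using the characterization \eqref{E:topcomplS4} of $\logic{S4}$ via finite quasi-trees (without a top element) in place of \eqref{E:topcomplS4.1.2}, and Corollary~\ref{Cor:quasitree} in place of Corollary~\ref{Cor:toppedquasitree}. For completeness, suppose $\Gamma\not\vdash_{\logic{S4}}\f$. Then by \eqref{E:topcomplS4} there is a finite quasi-tree $X\in\classt{QT}$ with $\Gamma\not\models_X\f$. By Corollary~\ref{Cor:quasitree} there is a surjective interior map $f\colon\omega^*\to X$, so Proposition~\ref{P:interiormap} yields $\Gamma\not\models_{\omega^*}\f$.

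For soundness, I would argue that $\Gamma\vdash_{\logic{S4}}\f$ implies $\Gamma\models_{\omega^*}\f$ by the usual induction on the length of the derivation. Since $\logic{S4}$ has no extra axioms beyond (K), (T), (4) and Necessitation, and these are validated by the interior-operator semantics on any topological space (this is the classical McKinsey--Tarski observation: $\mathrm{int}$ is deflationary, monotone, idempotent, and preserves finite intersections), the space $\omega^*$ requires no special properties here. Validity is preserved under modus ponens and substitution because $\models_{\omega^*}$ is a consequence relation, and under Necessitation because $\modelt{M}\models\f$ means $\|\f\|=X$, whence $\|\Box\f\|=\mathrm{int}(X)=X$.

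There is really no genuine obstacle; the only point that deserves a word is why the analogue of Corollary~\ref{Cor:quasitree} applies, namely that restricting the map from Theorem~\ref{Thm:tree} to $\omega^*$ still produces a surjective interior map onto the tree with its top element deleted --- but this is exactly the content of Corollary~\ref{Cor:quasitree}, which is stated and justified just above. Concretely:

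\begin{proof}
Completeness: suppose $\Gamma\not\vdash_{\logic{S4}}\f$. By~\eqref{E:topcomplS4} there is $X\in\classt{QT}$ with $\Gamma\not\models_X\f$. By Proposition~\ref{P:interiormap} and Corollary~\ref{Cor:quasitree} we conclude $\Gamma\not\models_{\omega^*}\f$.

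Soundness: From $\Gamma\vdash_{\logic{S4}}\f$ we prove $\Gamma\models_{\omega^*}\f$ by induction on the complexity of the proof. Since $\logic{S4}$ is axiomatized by (K), (T), (4) and the Necessitation rule on top of classical logic, it suffices to note that $\mathrm{int}$ is a monotone, deflationary, idempotent operator preserving finite intersections on any topological space, which makes (K), (T), (4) valid in every topological model; preservation of validity under the classical rules and Necessitation is routine (if $\modelt{M}\models\f$ then $\|\f\|=\omega^*$, hence $\|\Box\f\|=\mathrm{int}(\omega^*)=\omega^*$, so $\modelt{M}\models\Box\f$).
\end{proof}
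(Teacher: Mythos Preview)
Your proof is correct and follows exactly the same route as the paper: completeness via \eqref{E:topcomplS4}, Proposition~\ref{P:interiormap}, and Corollary~\ref{Cor:quasitree}, and soundness from the fact that every topological space validates $\logic{S4}$. The only difference is that you spell out the soundness argument in slightly more detail than the paper, which simply cites the well-known fact that $\logic{S4}$ is sound for all topological spaces.
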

\begin{proof}
Completeness follows by essentially the same argument as in the proof of Theorem~\ref{Thm:completenessBetaOmega}: we use~\eqref{E:topcomplS4}, Proposition~\ref{P:interiormap}, and Corollary~\ref{Cor:quasitree}.

Soundness follows from the fact that every topological space is sound w.r.t.\  the modal logic $\logic{S4}$.
\end{proof}

\section{Conclusion}	
Let us mention that the completeness results for the modal logic $\logic{{S4}}$ (and $\logic{{S4.1.2}}$, resp.) obtained in this  paper translate into completeness results for intutionistic logic (and, resp., the Jankov's logic $\logic{{KC}}$---i.e.\ intuitionistic logic + the axiom of weak excluded middle $\neg\varphi\vee \neg\neg\varphi$). For more details see~\cite{Bezhanishvili:2009}. The results can also be used to weaken the assumptions of many results from~\cite{Bezhanishvili:2012}, where the authors discuss the logics of stone spaces in general. For other topological completeness results for modal logics see, e.g., \cite{Bezhanishvili:2015,Bezhanishvili:2017}. 

While we needed to assume for our construction that there is a completely separable MAD family, we still think that this is not necessary. Unfortunately, we only have partial results in this direction: we know that Theorem \ref{Thm:tree} holds for trees of height $2$ without additional assumptions---the proof is analogous but in this case it is enough to consider $k$-partitionable MAD families, which are known to exists in ZFC (\cite{Douwen:1993b}); the theorem also holds for all binary trees (though with a very different proof using the fact, due to B.~Balcar and P.~Vojtáš (\cite{Balcar:1980b}, that
every ultrafilter has an almost disjoint refinement\footnote{The fact that using almost disjoint refinements works is not entirely unexpected because a MAD family \(\mathcal A\) is completely separable iff it is an almost disjoint refinement of \(\mathcal I^+(\mathcal A)\).}). Of course, if it turns out that completely separable MAD families 
exist in ZFC, our construction will be a ZFC construction.

\bibliographystyle{asl}
\bibliography{main}
\end{document}